\documentclass[11pt]{article}

% Essential packages for logic papers
\usepackage{amsmath}
\usepackage{amsthm}
\usepackage{amssymb}
\usepackage{bussproofs}
\usepackage{stmaryrd}
\usepackage{geometry}
\usepackage{stix}
\usepackage{cleveref}
\usepackage{mathtools}

\usepackage{enumitem}

% New commands

\newcommand{\TwoInt}{\mathrm{2Int}}
\newcommand{\NatTwoInt}{\mathrm{N2Int}}

% Page setup
\geometry{margin=1in}

% For the table's vertical dots
\makeatletter
\DeclareRobustCommand{\rvdots}{%
  \vbox{
    \baselineskip4\p@\lineskiplimit\z@
    \kern-\p@
    \hbox{.}\hbox{.}\hbox{.}
  }}
\makeatother

% Theorem environments
\theoremstyle{plain}
\newtheorem{theorem}{Theorem}

\theoremstyle{definition}

\theoremstyle{remark}

% Keywords command
\providecommand{\keywords}[1]
{
  \small	
  \textbf{\textit{Keywords---}} #1
}

\title{Strong Negation is Definable in $\TwoInt$}
\author{Hrafn Valt{\'y}r Oddsson\thanks{Institut für Philosophie I, Ruhr-Universit\"at Bochum.}}

\begin{document}

\maketitle

\begin{abstract}
I show that the strong negation is definable in $\TwoInt$, Wansing's bi-intuitionistic logic. 
\end{abstract}

\keywords{
2Int;
dual proofs;
refutation;
negation;
co-implication;
bi-intuitionistic logic;
bilateralism}

\section{Preliminaries}

Wansing’s bi-intuitionistic logic $\TwoInt$ from \cite{Wansing2013} (see also \cite{WANSING2017}) is a constructive logic with two forms of derivation: proofs and dual proofs. The natural deduction system for $\TwoInt$ used here is a slight variant on Wansing’s original $\NatTwoInt$, but it is easily seen as equivalent.\footnote{The main difference is that we follow \cite{Ayhan2024} and use dashed lines in some of the rules to indicate that the conclusion can be obtained either by a proof or by a dual proof.} 

The language
$\mathcal{L}_{\TwoInt}$ of $\TwoInt$ is given as
follows:
$$A\Coloneqq p|\top|\bot|A\wedge A|A\vee A|A\to A|A\lefttail A|$$
 The connective $\lefttail$ is called the \emph{co-implication}. We use single lines for proofs and double lines for dual proofs. We write $[A]$ to indicate that the $A$ may be discharged as an assumption, and we write $\llbracket A \rrbracket$ to indicate that the $A$ may be discharged as a counter assumption. A dashed line represents two rules: one where all dashed lines are replaced with single lines and another where all dashed lines are replaced with double lines.

\begin{center}
\footnotesize\begin{tabular}{ccc}
% Line 1: Conjunction Rules
\begin{minipage}{0.26\textwidth}
\begin{prooftree}
\AxiomC{}
\UnaryInfC{$A$}
\AxiomC{}
\UnaryInfC{$B$}
\RightLabel{$\wedge I^+$}
\BinaryInfC{$A \wedge B$}
\end{prooftree}
\end{minipage} &
\begin{minipage}{0.26\textwidth}
\begin{prooftree}
\AxiomC{}
\UnaryInfC{$A \wedge B$}
\RightLabel{$\wedge E_1^+$}
\UnaryInfC{$A$}
\end{prooftree}
\end{minipage} &
\begin{minipage}{0.26\textwidth}
\begin{prooftree}
\AxiomC{}
\UnaryInfC{$A \wedge B$}
\RightLabel{$\wedge E_2^+$}
\UnaryInfC{$B$}
\end{prooftree}
\end{minipage} \\[3em]

% Line 2: Disjunction Rules
\begin{minipage}{0.26\textwidth}
\begin{prooftree}
\AxiomC{}
\UnaryInfC{$A$}
\RightLabel{$\vee I_1^+$}
\UnaryInfC{$A \vee B$}
\end{prooftree}
\end{minipage} &
\begin{minipage}{0.26\textwidth}
\begin{prooftree}
\AxiomC{}
\UnaryInfC{$B$}
\RightLabel{$\vee I_2^+$}
\UnaryInfC{$A \vee B$}
\end{prooftree}
\end{minipage} &
\begin{minipage}{0.26\textwidth}
\begin{prooftree}
\AxiomC{}
\UnaryInfC{$A \vee B$}
\AxiomC{$[A]$}
\noLine
\UnaryInfC{$\rvdots$}
\dashedLine
\UnaryInfC{ $C$ }
\AxiomC{$[B]$}
\noLine
\UnaryInfC{$\rvdots$}
\dashedLine
\UnaryInfC{ $C$ }
\RightLabel{$\vee E^+$}
\dashedLine
\TrinaryInfC{$C$}
\end{prooftree}
\end{minipage} \\[3em]

% Line 3: Implication Rules
\begin{minipage}{0.26\textwidth}
\begin{prooftree}
\AxiomC{$[A]$}
\noLine
\UnaryInfC{$\rvdots$}
\UnaryInfC{$B$}
\RightLabel{$\to I^+$}
\UnaryInfC{$A \to B$}
\end{prooftree}
\end{minipage} &
\begin{minipage}{0.26\textwidth}
\begin{prooftree}
\AxiomC{}
\UnaryInfC{$A \to B$}
\AxiomC{}
\UnaryInfC{$A$}
\RightLabel{$\to E^+$}
\BinaryInfC{$B$}
\end{prooftree}
\end{minipage} &
\begin{minipage}{0.26\textwidth}
\end{minipage} \\[3em]

% Line 4: Co-Implication Rules
\begin{minipage}{0.26\textwidth}
\begin{prooftree}
\AxiomC{}
\UnaryInfC{$A$}
\AxiomC{}
\doubleLine
\UnaryInfC{$B$}
\RightLabel{$\lefttail I^+$}
\BinaryInfC{$A \lefttail B$}
\end{prooftree}
\end{minipage} &
\begin{minipage}{0.26\textwidth}
\begin{prooftree}
\AxiomC{}
\UnaryInfC{$A \lefttail B$}
\RightLabel{$\lefttail E_1^+$}
\UnaryInfC{$A$}
\end{prooftree}
\end{minipage} &
\begin{minipage}{0.26\textwidth}
\begin{prooftree}
\AxiomC{}
\UnaryInfC{$A \lefttail B$}
\RightLabel{$\lefttail E_2^+$}
\doubleLine
\UnaryInfC{$B$}
\end{prooftree}
\end{minipage} \\[3em]

% Line 5: Top and Bottom Rules
\begin{minipage}{0.26\textwidth}
\begin{prooftree}
\AxiomC{}
\RightLabel{$\top I^+$}
\UnaryInfC{$\top$}
\end{prooftree}
\end{minipage} &
\begin{minipage}{0.26\textwidth}
\begin{prooftree}
\AxiomC{}
\UnaryInfC{$\bot$}
\RightLabel{$\bot E^+$}
\dashedLine
\UnaryInfC{ $A$ }
\end{prooftree}
\end{minipage} &
\begin{minipage}{0.26\textwidth}
\end{minipage} \\[2em]

\end{tabular}
\end{center}

\begin{center}
\footnotesize\begin{tabular}{ccc}

% Line 1: Dual Conjunction Rules
\begin{minipage}{0.26\textwidth}
\begin{prooftree}
\AxiomC{}
\doubleLine
\UnaryInfC{$A$}
\RightLabel{$\wedge I_1^-$}
\doubleLine
\UnaryInfC{$A \wedge B$}
\end{prooftree}
\end{minipage} &
\begin{minipage}{0.26\textwidth}
\begin{prooftree}
\AxiomC{}
\doubleLine
\UnaryInfC{$B$}
\RightLabel{$\wedge I_2^-$}
\doubleLine
\UnaryInfC{$A \wedge B$}
\end{prooftree}
\end{minipage} &
\begin{minipage}{0.26\textwidth}
\begin{prooftree}
\AxiomC{}
\doubleLine
\UnaryInfC{$A \wedge B$}
\AxiomC{$\llbracket A \rrbracket$}
\noLine
\UnaryInfC{$\rvdots$}
\dashedLine
\UnaryInfC{ $C$ }
\AxiomC{$\llbracket B \rrbracket$}
\noLine
\UnaryInfC{$\rvdots$}
\dashedLine
\UnaryInfC{ $C$ }
\RightLabel{$\wedge E^-$}
\dashedLine
\TrinaryInfC{$C$}
\end{prooftree}
\end{minipage} \\[3em]

% Line 2: Dual Disjunction Rules
\begin{minipage}{0.26\textwidth}
\begin{prooftree}
\AxiomC{}
\doubleLine
\UnaryInfC{$A$}
\AxiomC{}
\doubleLine
\UnaryInfC{$B$}
\RightLabel{$\vee I^-$}
\doubleLine
\BinaryInfC{$A \vee B$}
\end{prooftree}
\end{minipage} &
\begin{minipage}{0.26\textwidth}
\begin{prooftree}
\AxiomC{}
\doubleLine
\UnaryInfC{$A \vee B$}
\RightLabel{$\vee E_1^-$}
\doubleLine
\UnaryInfC{$A$}
\end{prooftree}
\end{minipage} &
\begin{minipage}{0.26\textwidth}
\begin{prooftree}
\AxiomC{}
\doubleLine
\UnaryInfC{$A \vee B$}
\RightLabel{$\vee E_2^-$}
\doubleLine
\UnaryInfC{$B$}
\end{prooftree}
\end{minipage} \\[3em]

% Line 3: Dual Implication Rules
\begin{minipage}{0.26\textwidth}
\begin{prooftree}
\AxiomC{}
\UnaryInfC{$A$}
\AxiomC{}
\doubleLine
\UnaryInfC{$B$}
\RightLabel{$\to I^-$}
\doubleLine
\BinaryInfC{$A \to B$}
\end{prooftree}
\end{minipage} &
\begin{minipage}{0.26\textwidth}
\begin{prooftree}
\AxiomC{}
\doubleLine
\UnaryInfC{$A \to B$}
\RightLabel{$\to E_1^-$}
\UnaryInfC{$A$}
\end{prooftree}
\end{minipage} &
\begin{minipage}{0.26\textwidth}
\begin{prooftree}
\AxiomC{}
\doubleLine
\UnaryInfC{$A \to B$}
\RightLabel{$\to E_2^-$}
\doubleLine
\UnaryInfC{$B$}
\end{prooftree}
\end{minipage} \\[3em]

% Line 4: Dual Co-Implication Rules
\begin{minipage}{0.26\textwidth}
\begin{prooftree}
\AxiomC{$\llbracket B \rrbracket$}
\noLine
\UnaryInfC{$\rvdots$}
\doubleLine
\UnaryInfC{$A$}
\RightLabel{$\lefttail I^-$}
\doubleLine
\UnaryInfC{$A \lefttail B$}
\end{prooftree}
\end{minipage} &
\begin{minipage}{0.26\textwidth}
\begin{prooftree}
\AxiomC{}
\doubleLine
\UnaryInfC{$A \lefttail B$}
\AxiomC{}
\doubleLine
\UnaryInfC{$B$}
\RightLabel{$\lefttail E^-$}
\doubleLine
\BinaryInfC{$A$}
\end{prooftree}
\end{minipage} &
\begin{minipage}{0.26\textwidth}
\end{minipage} \\[3em]

% Line 5: Dual Top and Bottom Rules
\begin{minipage}{0.26\textwidth}
\begin{prooftree}
\AxiomC{}
\doubleLine
\UnaryInfC{$\top$}
\RightLabel{$\top E^-$}
\dashedLine
\UnaryInfC{ $A$ }
\end{prooftree}
\end{minipage} &
\begin{minipage}{0.26\textwidth}
\begin{prooftree}
\AxiomC{}
\RightLabel{$\bot I^-$}
\doubleLine
\UnaryInfC{$\bot$}
\end{prooftree}
\end{minipage} &
\begin{minipage}{0.26\textwidth}
\end{minipage}
\end{tabular}
\end{center}

A \emph{proof} of a formula $A$ from assumptions $\Gamma$ and counter assumptions $\Delta$ is a tree constructed from the rules above whose:
\begin{itemize}[noitemsep]
\item root is $\overline{A}$
\item leaves with single lines are either in $\Gamma$, discharged, or $\top$
\item leaves with double lines are either in $\Delta$, discharged, or $\bot$.
\end{itemize}
A \emph{dual proof} of a formula $A$ from assumptions $\Gamma$ and counter assumptions $\Delta$ is just as a proof of $A$ except its root is $\overline{\overline{A}}$.

We write $(\Gamma; \Delta) \vdash_{\TwoInt}^+ A$ when a proof of $A$ from assumptions $\Gamma$ and counter-assumptions $\Delta$, and $(\Gamma; \Delta) \vdash_{\TwoInt}^- A$ when there is a dual proof of $A$ from assumptions $\Gamma$ and counter-assumptions $\Delta$.

\section{Strong Negation in $\TwoInt$}
Strong negation was introduced in Nelson's constructive logic N4 \cite{Nelson1949, AlmukdadNelson1984}. In a bilateral framework, its role is to switch between proofs and dual proofs, which is captured by the following introduction and elimination rules from \cite[p. 28]{WANSING2017}:

\vspace{2em}

\begin{minipage}{0.24\textwidth}
\begin{prooftree}
\AxiomC{}
\doubleLine
\UnaryInfC{$A$}
\RightLabel{${\sim} I^+$}
\UnaryInfC{${\sim} A$}
\end{prooftree}
\end{minipage}
\begin{minipage}{0.24\textwidth}
\begin{prooftree}
\AxiomC{}
\UnaryInfC{$A$}
\RightLabel{${\sim} I^-$}
\doubleLine
\UnaryInfC{${\sim} A$}
\end{prooftree}
\end{minipage}
\begin{minipage}{0.24\textwidth}
\begin{prooftree}
\AxiomC{}
\UnaryInfC{${\sim} A$}
\RightLabel{${\sim} E^+$}
\doubleLine
\UnaryInfC{$A$}
\end{prooftree}
\end{minipage}
\begin{minipage}{0.24\textwidth}
\begin{prooftree}
\AxiomC{}
\doubleLine
\UnaryInfC{${\sim} A$}
\RightLabel{${\sim} E^-$}
\UnaryInfC{$A$}
\end{prooftree}
\end{minipage}

\vspace{2em}

\begin{theorem}
    The strong negation is definable in $\TwoInt$ via the formula $$(A \wedge (A \rightarrow (A \lefttail A))) \vee ((A \rightarrow A) \lefttail A).$$ 
\end{theorem}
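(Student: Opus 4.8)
The plan is to show that the displayed formula, henceforth abbreviated $D(A)$, can stand in for ${\sim}A$ by checking that each of the four primitive rules ${\sim}I^+$, ${\sim}I^-$, ${\sim}E^+$, ${\sim}E^-$ becomes a \emph{derivable} rule when ${\sim}A$ is replaced by $D(A)$. Writing $D(A)$ as $P \vee Q$ with $P := A \wedge (A \to (A \lefttail A))$ and $Q := (A \to A) \lefttail A$, this is exactly the task of establishing the four implications: (1) $(\Gamma;\Delta)\vdash_{\TwoInt}^- A$ implies $(\Gamma;\Delta)\vdash_{\TwoInt}^+ D(A)$; (2) $(\Gamma;\Delta)\vdash_{\TwoInt}^+ A$ implies $(\Gamma;\Delta)\vdash_{\TwoInt}^- D(A)$; (3) $(\Gamma;\Delta)\vdash_{\TwoInt}^+ D(A)$ implies $(\Gamma;\Delta)\vdash_{\TwoInt}^- A$; and (4) $(\Gamma;\Delta)\vdash_{\TwoInt}^- D(A)$ implies $(\Gamma;\Delta)\vdash_{\TwoInt}^+ A$. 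In each case I would exhibit a single schematic derivation that grafts the given (dual) proof of $A$, or of $D(A)$, into a fixed surrounding figure; since side assumptions and counter-assumptions simply propagate, it suffices to treat the principal formula. I would therefore present the proof as four derivations, one per rule.

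The two introduction rules are the easy half. For (1), $A \to A$ is a single-line theorem (by ${\to}I^+$, discharging $[A]$), so ${\lefttail}I^+$ applied to this theorem and the given dual proof of $A$ yields a proof of $Q$, and ${\vee}I_2^+$ then gives a proof of $D(A)$. For (2), I first observe that $A \lefttail A$ has a dual proof with no open counter-assumptions: discharge $\llbracket A\rrbracket$ by ${\lefttail}I^-$. Running the given single-line proof of $A$ through ${\to}I^-$ then builds a dual proof of $A \to (A \lefttail A)$ and, similarly, of $A \to A$; whence ${\wedge}I_2^-$ produces a dual proof of $P$ and ${\lefttail}I^-$ (again discharging $\llbracket A \rrbracket$) a dual proof of $Q$, and ${\vee}I^-$ combines the two into a dual proof of $D(A)$. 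Note that ${\vee}I^-$ demands dual proofs of \emph{both} disjuncts, which is precisely why the second disjunct $Q$ has to be there.

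The elimination rules are where the work lies. For (3) I apply ${\vee}E^+$ to the given proof of $P \vee Q$ with its dashed lines instantiated as double lines, so that the conclusion and both minor derivations are dual proofs; both minor premises must then deliver a dual proof of $A$. From the assumption $P$ this runs through ${\wedge}E_1^+$, ${\wedge}E_2^+$, ${\to}E^+$, and finally the type-switching ${\lefttail}E_2^+$; from the assumption $Q$ a single ${\lefttail}E_2^+$ does it. The hard part, which I expect to be the crux of the whole proof, is (4). The naive attempt — use ${\vee}E_1^-$ to extract a dual proof of $P$ and then apply ${\wedge}E^-$ aiming at a single-line $A$ — fails on the branch with counter-assumption $\llbracket A\rrbracket$: there one is handed a dual proof of $A$ but must produce a proof of $A$, which is not derivable from $P$ alone. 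The resolution is to also extract a dual proof of $Q$ (by ${\vee}E_2^-$) and to feed it into exactly that branch: from $\llbracket A\rrbracket$ and the dual proof of $Q = (A \to A)\lefttail A$, the rule ${\lefttail}E^-$ yields a dual proof of $A \to A$, and ${\to}E_1^-$ then delivers the required single-line $A$; the companion branch, on $\llbracket A \to (A\lefttail A)\rrbracket$, closes at once by ${\to}E_1^-$. Recognizing that $Q$ must be \emph{reused inside} the ${\wedge}E^-$ elimination — it supplies precisely the step converting a falsification of $A$ into a verification of $A$ — is the main obstacle to overcome.

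Since the four derivable rules just obtained are exactly the defining rules of strong negation, $D(A)$ is interderivable with ${\sim}A$ in both the proof and the dual-proof sense whenever ${\sim}$ is present, and so strong negation is definable in $\TwoInt$ by $D(A)$.
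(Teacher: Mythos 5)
Your proposal is correct and matches the paper's own proof essentially step for step: the same derivations for both introduction rules, the same instantiation of the dashed lines in $\vee E^+$ as double lines for ${\sim}E^+$, and the same key move for ${\sim}E^-$ of extracting $(A \to A) \lefttail A$ via $\vee E_2^-$ and feeding it into the $\llbracket A \rrbracket$ branch of $\wedge E^-$ through $\lefttail E^-$ and $\to E_1^-$. Nothing to add.
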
 

\begin{proof}
    We show that the rules for ${\sim}$ are derivable for $(A \wedge (A \rightarrow (A \lefttail A))) \vee ((A \rightarrow A) \lefttail A).$
\vspace{2em}

${\sim} I^+$:
\begin{prooftree}
\AxiomC{$[A]^1$}
\RightLabel{$\rightarrow\! I^+${(1)}}
\UnaryInfC{$A \rightarrow A$}
\AxiomC{}
\doubleLine
\UnaryInfC{$A$}
\RightLabel{$\lefttail I^+$}
\BinaryInfC{$(A \rightarrow A) \lefttail A$}
\RightLabel{$\vee I_2^+$}
\UnaryInfC{$(A \wedge (A \rightarrow (A \lefttail A))) \vee ((A \rightarrow A) \lefttail A)$}
\end{prooftree}

\vspace{2em}

${\sim} I^-$:
\begin{prooftree}
\AxiomC{}\UnaryInfC{$A$}
\AxiomC{$\llbracket A \rrbracket^1$}
\RightLabel{$\lefttail\! I^-${(1)}}
\doubleLine\UnaryInfC{$A\lefttail A$}
\RightLabel{$\rightarrow I^-$}
\doubleLine\BinaryInfC{$A \rightarrow (A \lefttail A)$}
\RightLabel{$\wedge I_1^-$}
\doubleLine\UnaryInfC{$A \wedge (A \rightarrow (A \lefttail A))$}
\AxiomC{}\UnaryInfC{$A$}
\AxiomC{$\llbracket A \rrbracket^2$}
\RightLabel{$\rightarrow I^-$}
\doubleLine\BinaryInfC{$A \rightarrow A$}
\RightLabel{$\lefttail\! I^-${(2)}}
\doubleLine\UnaryInfC{$(A \rightarrow A) \lefttail A$}
\RightLabel{$\vee I^-$}
\doubleLine\BinaryInfC{$(A \wedge (A \rightarrow (A \lefttail A))) \vee ((A \rightarrow A) \lefttail A)$}
\end{prooftree}

\vspace{2em}

${\sim} E^+$:
{\footnotesize
\begin{prooftree}
\AxiomC{}\UnaryInfC{$(A \wedge (A \rightarrow (A \lefttail A))) \vee ((A \rightarrow A) \lefttail A)$}
\AxiomC{$[A \wedge (A \rightarrow (A \lefttail A))]^1$}
\RightLabel{$\wedge E_1^+$}
\UnaryInfC{$A$}
\AxiomC{$[A \wedge (A \rightarrow (A \lefttail A))]^1$}
\RightLabel{$\wedge E_2^+$}
\UnaryInfC{$A \rightarrow (A \lefttail A)$}
\RightLabel{$\rightarrow E^+$}
\BinaryInfC{$A \lefttail A$}
\RightLabel{$\lefttail E_2^+$}
\doubleLine
\UnaryInfC{$A$}
\AxiomC{$[(A \rightarrow A) \lefttail A]^2$}
\RightLabel{$\lefttail E_2^+$}
\doubleLine
\UnaryInfC{$A$}
\RightLabel{$\vee E^+${(1,2)}}
\doubleLine
\TrinaryInfC{$A$}
\end{prooftree}
}

\vspace{2em}

${\sim} E^-$:
{\footnotesize
\begin{prooftree}
\AxiomC{}\doubleLine\UnaryInfC{$(A \wedge (A \rightarrow (A \lefttail A))) \vee ((A \rightarrow A) \lefttail A)$}
\RightLabel{$\vee E_1^-$}
\doubleLine\UnaryInfC{$A \wedge (A \rightarrow (A \lefttail A))$}
\AxiomC{$\llbracket A \rrbracket^1$}
\AxiomC{}\doubleLine\UnaryInfC{$(A \wedge (A \rightarrow (A \lefttail A))) \vee ((A \rightarrow A) \lefttail A)$}
\RightLabel{$\vee E_2^-$}
\doubleLine\UnaryInfC{$(A \rightarrow A) \lefttail A$}
\RightLabel{$\lefttail E^-$}
\doubleLine
\BinaryInfC{$A \rightarrow A$}
\RightLabel{$\rightarrow E_1^-$}
\UnaryInfC{$A$}
\AxiomC{$\llbracket A \rightarrow (A \lefttail A) \rrbracket^2$}
\RightLabel{$\rightarrow E_1^-$}
\UnaryInfC{$A$}
\RightLabel{$\wedge E^-${(1,2)}}
\TrinaryInfC{$A$}
\end{prooftree}
}
    
\end{proof}

\section*{Acknowledgments}
The question of strong negation's definability in $\TwoInt$ was posed to me by Sergey Drobyshevich, who had already shown that it is definable under the additional assumption of Peirce's law. My initial solution was then simplified by Satoru Niki, who also noted that $\top$ and $\bot$ are not needed. I also benefited greatly from conversations with Heinrich Wansing and Hitoshi Omori.

\bibliographystyle{alpha}
\bibliography{references}

\end{document}